\newtheorem{theorem}{Theorem}
\newtheorem{proposition}{Proposition}
\newtheorem{lemma}{Lemma}
\newtheorem{corollary}{Corollary}
\newtheorem{remark}{Remark}
\newtheorem{observation}{Observation}
\newtheorem{conjecture}{Conjecture}
\begin{document}
\title{\Large\bf Note on the oriented diameter\\ of graphs with diameter $3$\footnote{Supported by NSFC
No.11071130.}}
\author{\small Hengzhe Li, Xueliang Li, Yuefang Sun, Jun Yue
\\
\small Center for Combinatorics and LPMC-TJKLC
\\
\small Nankai University, Tianjin 300071, China
\\
\small lhz2010@mail.nankai.edu.cn; lxl@nankai.edu.cn\\
\small bruceseun@gmail.com; yuejun06@126.com}
\date{}
\maketitle
\begin{abstract}
In 1978, Chv\'atal and Thomassen showed that every bridgeless graph
with diameter $2$ has an orientation with diameter at most $6$. They
also gave general bounds on the smallest value $f(d)$ such that
every bridgeless graph $G$ with diameter $d$ has an orientation with
diameter at most $f(d)$. For $d=3$, they proved that $8\leq f(d)\leq
24$. Until recently, Kwok, Liu and West improved the above bounds by
proving $9\leq f(3)\leq 11$ in [P.K. Kwok, Q. Liu and D.B. West,
Oriented diameter of graphs with diameter $3$, J. Combin. Theory
Ser.B 100(2010), 265-274]. In this paper, we determine the oriented
diameter among the bridgeless graphs with diameter 3 that have
minimum number of edges.

{\flushleft\bf Keywords}: diameter, oriented diameter, extremal
graph.\\[2mm]
{\bf AMS subject classification 2010:} 05C12, 05C20.
\end{abstract}

\section{Introduction}

All graphs in this paper are finite and simple. A $graph\ G$ is an
ordered pair $(V(G),E(G))$ consisting of a set $V(G)$ of $vertices$
and a set $E(G)$ of $edges$. We refer to book \cite{bondy} for graph
theoretical notation and terminology not given here. An orientation
of a graph $G$ is a $digraph$ obtained from $G$ by replacing each
edge by just one of the two possible arcs with the same ends. We
occasionally use the symbol $\overrightarrow{G}$ to specify an
orientation of $G$ (even though a graph generally has many
orientations). An orientation of a simple graph is referred to as an
$oriented\ graph$. The diameter of a graph $G$ (digraph $D$) is
$\max \{d(u,v)\ |\ u,v\in V(G)\}$ ($\max \{d(u,v)\ |\ u,v\in
V(D)\}$), denoted by $diam(G)\ (diam(D))$. The oriented diameter of
a bridgeless graph is $\min\{ diam(\overrightarrow{G})\ |
\overrightarrow{G}\ is\ an\ orientation\ of\ G\}$.

In 1939, Robbins solved the One-Way Street Problem and proved that a
graph $G$ admits a strongly connected orientation if and only if $G$
is bridgeless, that is, does not have any cut-edge. Naturally, one
hopes that the oriented diameter of a bridgeless graph is as small
as possible. Bondy and Murty suggested to study the quantitative
variations on Robbins' theorem. In particular, they conjectured that
there exists a function $f$ such that every bridgeless graph with
diameter $d$ admits an orientation of diameter at most $f(d)$.

In 1978, Chv\'atal and Thomassen \cite{chv} gave general bounds
$\frac{1}{2}d^2+d \leq f(d)\leq 2d^2+2d$ for $d\geq 2$. These bounds
have not got improved in the past several decades. They also proved
that $f(1)=3$ and $f(2)=6$. For $d=3$, the general result reduces to
$8\leq f(3)\leq 24$. Until recently, Kwok, Liu and West improved the
above bounds by proving $9\leq f(3)\leq 11$ in \cite{kwo}.

In \cite{chv,kwo}, they got the bounds by considering the distances
from two special adjacent vertices to the other vertices, the
distances from the other vertices to the two special vertices, and
the triangle inequality of distances. In this paper, we will
directly consider the distance from one vertex to another in
bridgeless graphs with diameter $3$ that have minimum number of
edges.

Chv\'atal and Thomassen \cite{chv} showed that determining whether
an arbitrary graph may be oriented so that its diameter is at most 2
is NP-complete. Bounds of oriented diameter of graphs have also been
studied in terms of other parameters, for example, radius,
dominating number \cite{chv,fom,sol}, etc. Some classes of graphs
have also been studied in \cite{fom, koh1,koh2,kon,mcc}.

This paper is organized as follows: in Section $2$, we give some
known results and introduce a class of extremal bridgeless graphs
with diameter $3$; in Section $3$, we present a vertex set partition
of this graph class, and specify an orientation under this
partition; in Section $4$, we show the above orientation is optimal.

\section{Preliminaries}

In \cite{kwo}, Kwok, Liu and West gave the following proposition by
constructing the graph of Figure $1a$.

\begin{figure}[h,t,b,p]
\begin{center}
\scalebox{0.9}[0.9]{\includegraphics{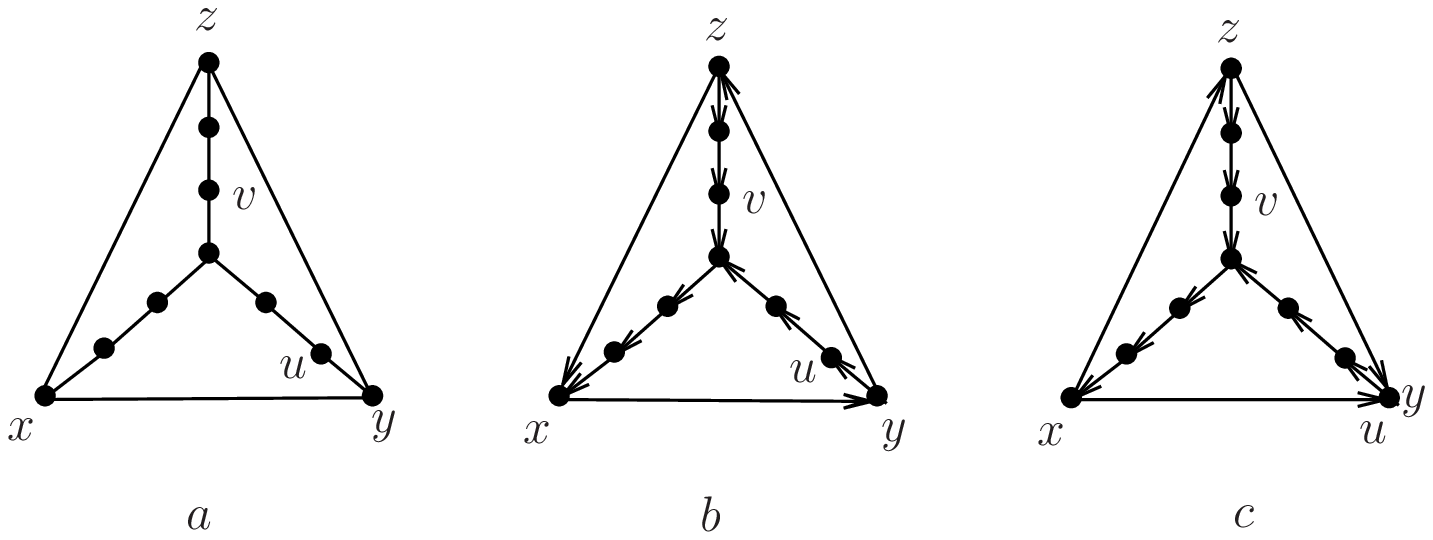}}

Figure 1. A graph of diameter $3$ and its two\\ oriented graphs with
diameter $9$.
\end{center}
\end{figure}

\begin{proposition}\cite{kwo}
$$f(3)\geq 9.$$
\end{proposition}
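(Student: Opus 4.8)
The plan is to establish the lower bound by exhibiting a single witness: one bridgeless graph $G$ of diameter $3$ — the graph $G$ of Figure 1a — such that \emph{every} orientation of $G$ has diameter at least $9$. Since $f(3)$ is by definition the least integer that serves as a universal upper bound on the oriented diameter over all bridgeless graphs of diameter $3$, producing one graph whose every orientation has diameter $\geq 9$ forces $f(3) \geq 9$. So the entire task reduces to analysing this one fixed graph.

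First I would verify the two structural hypotheses directly from the figure: that $G$ is bridgeless (equivalently, every edge lies on a cycle) and that $diam(G) = 3$ (by checking that every pair of vertices is at distance at most $3$, and that some pair realizes distance $3$). Both are finite, routine verifications on a fixed graph, so the real content lies elsewhere.

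Second, and this is the heart of the argument, I would show that no orientation of $G$ achieves diameter smaller than $9$, i.e.\ that for \emph{every} orientation $\overrightarrow{G}$ there is an ordered pair $(u,v)$ with $d_{\overrightarrow{G}}(u,v)\geq 9$. The tool I would exploit is the abundance of small edge-cuts: because $G$ is sparse and built to be just barely $2$-edge-connected, it contains many $2$-edge-cuts $\{e_1,e_2\}$ whose removal separates $G$ into two sides. In any strongly connected orientation, exactly one of $e_1,e_2$ must point from the first side to the second and the other from the second to the first; otherwise one side could not reach the other. Chaining this local constraint along the nested family of cuts in $G$, I would argue that any directed path between a carefully chosen antipodal pair $u,v$ is forced to traverse a long, essentially prescribed sequence of arcs, and I would bound this length from below by $9$.

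Third I would handle the dependence on the orientation. Up to the symmetries of $G$ (which I would record explicitly), the orientation is pinned down on the critical cut-edges by the constraints above, leaving only a bounded number of genuinely distinct choices on the remaining edges near the center of $G$; in each residual case I would exhibit a pair at directed distance $\geq 9$. The main obstacle I anticipate is precisely this case management: the number of orientations is exponential in $|E(G)|$, so the proof goes through only if the cut constraints together with symmetry collapse it to a handful of cases. The two orientations shown in Figures 1b and 1c, each of diameter exactly $9$, both confirm that the bound is tight and indicate which antipodal pair $(u,v)$ to track throughout the general argument.
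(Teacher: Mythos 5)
Your proposal matches the paper's argument: the paper establishes $f(3)\geq 9$ by exhibiting the single witness graph of Figure 1a (bridgeless, diameter $3$) and observing that its $2$-edge-cuts through degree-$2$ vertices force every strong orientation to be isomorphic to one of the two orientations in Figures 1b and 1c, in each of which $d(u,v)=9$. Same witness, same cut-forcing mechanism, same tracked pair $(u,v)$ — no substantive difference.
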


Clearly, the distance from $u$ to $v$ is $9$ in Figures $1b$ and
$1c$. (each oriented graph is isomorphic to one of Figures $1a$ and
$1b$).

Let $S$ and $S'$ be two disjoint vertex sets. We use $E[S,S']$ to
denote the set of edges having one end in each one of $S$ and $S'$.

\begin{lemma}\cite{kwo}
In a graph $H$, let $S$ and $S'$ be disjoint vertex sets such that
$S'\subseteq N_{H}(S)$, and let $F$ denote the graph $H[S']\cup
(S\cup S',E[S', S])$. If the induced subgraph $H[S']$ is connected
and nontrivial, then there is an orientation $\overrightarrow{F}$ of
$F$ such that $d_{\overrightarrow{F}}(S,w)\leq 2$ and
$d_{\overrightarrow{F}}(w,S)\leq 2$ for every $w\in S'$.
\end{lemma}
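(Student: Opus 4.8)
The plan is to exploit the fact that in $F$ the set $S$ is independent (all edges of $F$ lie inside $S'$ or between $S'$ and $S$) and that, by hypothesis, every vertex of $S'$ has at least one neighbour in $S$. It therefore suffices to arrange, for each $w \in S'$, one short route that enters $w$ from $S$ and one that leaves $w$ towards $S$; since $d_{\overrightarrow{F}}(S,w)$ and $d_{\overrightarrow{F}}(w,S)$ measure distance to the \emph{set} $S$, it is enough to reach, or depart to, \emph{some} vertex of $S$.

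First I would fix a spanning tree $T$ of the induced subgraph $H[S']$, which exists and contains at least one edge because $H[S']$ is connected and nontrivial. Since $T$ is bipartite, properly $2$-colour its vertices, obtaining a partition $S' = A \cup B$ into the two colour classes. Both classes are nonempty, because an edge of $T$ has its ends in different classes, and, crucially, every vertex of $S'$ is incident with a tree edge joining the two classes, hence has a neighbour in $H[S']$ of the opposite class.

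Next I would define the orientation $\overrightarrow{F}$ as follows. Orient every edge of $H[S']$ that runs between $A$ and $B$ from $A$ to $B$. For each $a \in A$ choose a neighbour $s_a \in S$ and orient the corresponding edge as $s_a \to a$; for each $b \in B$ choose a neighbour $s_b \in S$ and orient the corresponding edge as $b \to s_b$. All remaining edges of $F$ (edges inside $A$, edges inside $B$, and the unused edges between $S'$ and $S$) may be oriented arbitrarily, since extra arcs can only shorten distances. The verification then splits into four short cases: a vertex $a \in A$ is reached from $S$ in one step via $s_a \to a$, and reaches $S$ in two steps via $a \to b \to s_b$, where $b \in B$ is a cross-class neighbour of $a$; symmetrically, a vertex $b \in B$ reaches $S$ in one step via $b \to s_b$ and is reached from $S$ in two steps via $s_a \to a \to b$. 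This yields $d_{\overrightarrow{F}}(S,w) \le 2$ and $d_{\overrightarrow{F}}(w,S) \le 2$ for every $w \in S'$.

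The main point requiring care — and the only place where the hypotheses are genuinely used — is guaranteeing that both colour classes are nonempty and that every vertex of $S'$ has a neighbour of the opposite class, so that each of the four routes above actually exists; this is exactly what connectedness and nontriviality of $H[S']$ supply through the spanning tree $T$. Everything else is a direct check, and the freedom to orient the leftover edges arbitrarily means no further balancing between the two distance requirements is needed.
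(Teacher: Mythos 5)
Your proof is correct. Note that the paper itself gives no proof of this lemma---it is imported from the cited Kwok--Liu--West paper---and your argument (spanning tree of $H[S']$, two-colour it as $A\cup B$, orient $A\to B$ inside $S'$, $S\to A$ and $B\to S$ on one chosen edge per vertex, rest arbitrary) is essentially the standard proof given there. As a bonus, your argument applies verbatim to each nontrivial component separately, which is exactly the strengthening recorded in Remark~1.
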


\begin{remark}
In fact, the condition ``the induced subgraph $H[S']$ is connected
and nontrivial'' can be replaced by `` the induced subgraph $H[S']$
does not have trivial components'' by the proof of Lemma~$1$.
\end{remark}

Now, we introduce a class of extremal graph. Denote by $G(n, k,
\lambda, s)$ the class of graphs with $n$ vertices and diameter at
most $k$ which have the property that by deleting any $s$ or fewer
edges the resulting subgraphs have diameters at most $\lambda >k$.
Furthermore, denote by Min$G(n, k, \lambda, s)$ the subclass (of
$G(n, k, \lambda, s)$) of graphs with minimum number of edges, and
denote by $M(n,k,\lambda,s)$ the minimum possible number of edges.

In \cite{cac2}, Caccetta gave the following observation and lemma.

\begin{figure}[h,t,b]
\begin{center}
\scalebox{0.7}[0.7]{\includegraphics{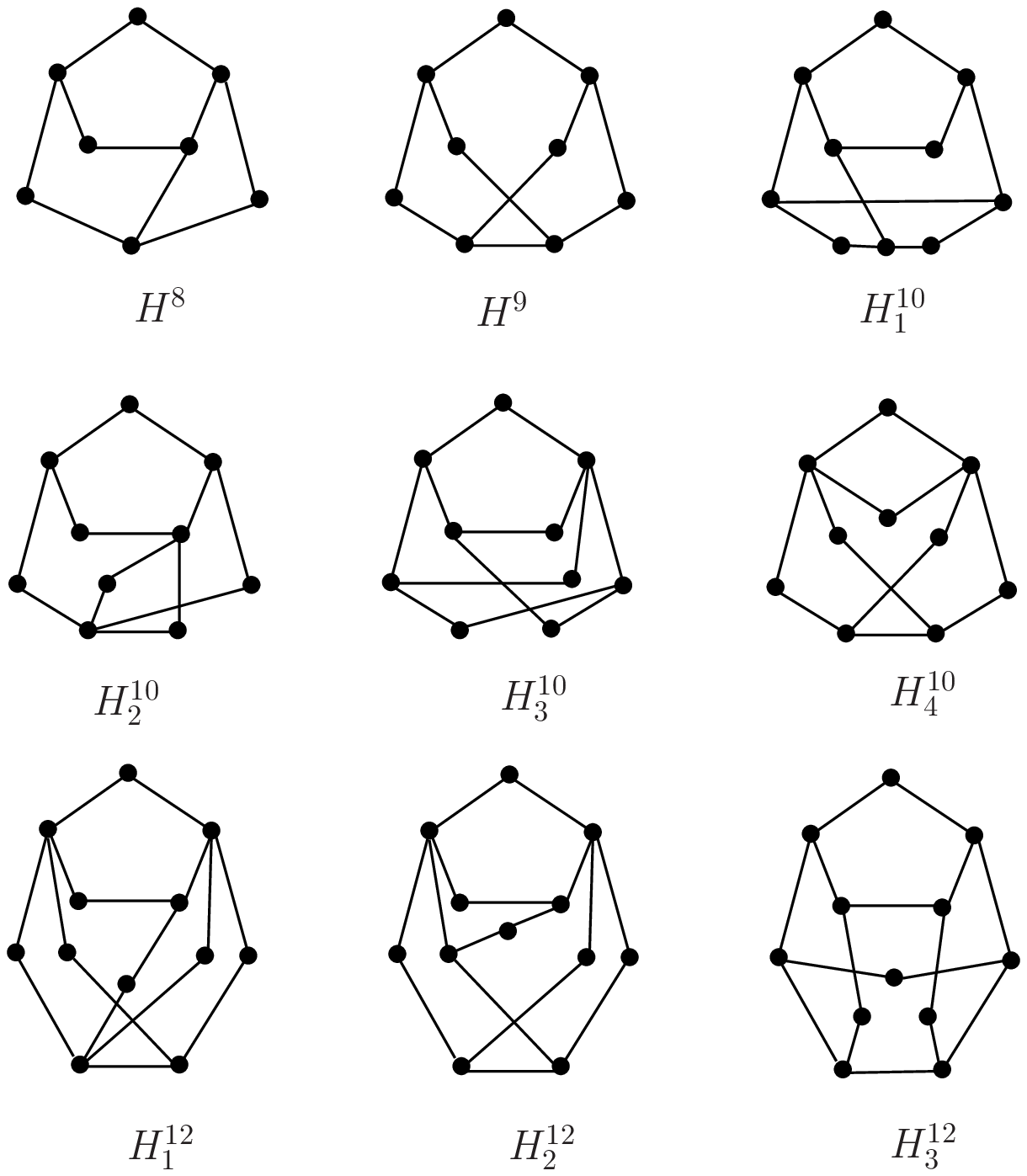}}

Figure 2. The graphs $H^8,H^9,H^{10}_j (j=1,2,3,4)$ and
$H^{12}_j(j=1,2,3).$
\end{center}
\end{figure}

\begin{observation}\cite{cac2}

$(1)$ A $G(n,k, \lambda,s)$ graph is also a $G(n,k', \lambda',s')$
graph, whenever $k'\geq k, \lambda'\geq \lambda$ and $s'\leq s$.
Consequently, the function $M(n, k, \lambda, s)$ is monotonic
non-decreasing in $s$, and monotonic non-increasing in $k$ and
$\lambda$.

$(2)$ In a $G(n,k,\lambda,s)$ there will be at least $s+1$ edge
disjoint paths of length $\leq \lambda$ between any two vertices, at
least one of which has length $\leq k$.

$(3)$ The degree of every vertex of $G$ is at least $s+1$, that is,
$\delta (G)\geq s+1$.

$(4)$ If $\delta (G)= s+1$, then every vertex of $G$ which is not
adjacent to $x$ with degree $\delta (G)$ must be connected to each
of the $s+1$ vertices adjacent to $x$ by a path of length $\leq
\lambda -1$ (from $(2)$).
\end{observation}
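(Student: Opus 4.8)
The plan is to treat the four parts in order, deriving the later ones from part~(2), which carries the real content. For part~(1) I would argue purely from the definitions: if $G\in G(n,k,\lambda,s)$ and $k'\ge k$, $\lambda'\ge\lambda$, $s'\le s$, then $G$ still has $n$ vertices, its diameter is at most $k\le k'$, and deleting any $s'\le s$ edges is in particular deleting at most $s$ edges, so the defining robustness gives resulting diameter at most $\lambda\le\lambda'$; hence $G\in G(n,k',\lambda',s')$, i.e.\ $G(n,k,\lambda,s)\subseteq G(n,k',\lambda',s')$. The monotonicity of $M$ is then immediate, since minimizing the edge count over a larger class cannot increase it: enlarging $k$ or $\lambda$ (or shrinking $s$) can only decrease $M$, so $M$ is non-increasing in $k$ and $\lambda$ and non-decreasing in $s$.

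For part~(2), fix two vertices $u,v$. The clause that at least one path has length $\le k$ is simply the statement $d_G(u,v)\le k$, which holds because $\mathrm{diam}(G)\le k$; take a geodesic. For the $s+1$ edge-disjoint paths I would combine the deletion property with Menger's theorem: if some $u$-$v$ edge cut had at most $s$ edges, deleting it would disconnect $u$ from $v$, forcing $d(u,v)=\infty>\lambda$ and contradicting the hypothesis, so every $u$-$v$ edge cut has size at least $s+1$, and by the edge version of Menger's theorem there are at least $s+1$ edge-disjoint $u$-$v$ paths. To obtain the extra requirement that these paths can \emph{simultaneously} be taken of length at most $\lambda$, I would try a greedy extraction: maintain a maximal family of edge-disjoint paths of length $\le\lambda$, and if fewer than $s+1$ have been found, delete one edge from each path in the family and invoke the robustness property to produce a further short path, then argue it can be rerouted to be edge-disjoint from the current family.

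Parts~(3) and~(4) I would then read off from the above. For~(3) the cleanest route is direct: if some vertex $x$ had degree at most $s$, then deleting the $\le s$ edges incident to $x$ would isolate $x$, again forcing infinite diameter and contradicting the property, so $\delta(G)\ge s+1$. For~(4), suppose $\delta(G)=s+1$, let $x$ have degree $s+1$, and let $y$ be a vertex not adjacent to $x$. By~(2) there are $s+1$ edge-disjoint $x$-$y$ paths, each of length $\le\lambda$; being edge-disjoint and all starting at $x$, they use $s+1$ distinct edges at $x$, which are \emph{all} of them, so each neighbour $w_i$ of $x$ is the second vertex of exactly one such path $P_i$. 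Removing the first edge $xw_i$ from $P_i$ leaves the subpath of $P_i$ from $w_i$ to $y$, a $w_i$-$y$ path of length at most $\lambda-1$; since every neighbour of $x$ occurs, this proves~(4).

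The main obstacle is the length control in part~(2): bounding the \emph{number} of edge-disjoint paths via Menger is routine, but forcing all of them to have length at most $\lambda$ is a length-bounded (rather than ordinary) edge-connectivity statement, for which the max-flow/min-cut equality is known to fail in general. I would therefore expect the crux to lie in exploiting the full strength of the hypothesis---that deleting \emph{any} set of $\le s$ edges, not merely one fixed cut, preserves diameter $\le\lambda$ for \emph{every} pair---to rule out the troublesome overlapping configurations, and I would be ready to retreat to the weaker but sufficient reading (one path of length $\le k$ together with $s+1$ edge-disjoint paths) should the uniform length bound turn out to be more delicate than the observation suggests.
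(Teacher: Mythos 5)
The paper does not actually prove this observation; it is quoted from Caccetta's 1979 paper without argument, so there is no in-paper proof to compare yours against and it must be judged on its own. Your treatment of (1) and (3) is correct and complete, and your derivation of (4) from (2) is valid as far as it goes. The genuine gap is exactly where you locate it: the full strength of part (2) is a length-bounded Menger equality (``every edge set of size $\leq s$ leaves a $u$--$v$ path of length $\leq\lambda$'' implies ``there are $s+1$ edge-disjoint such paths''), and this type of implication is known to fail in general --- the minimum size of an edge set meeting all $u$--$v$ paths of length $\leq\lambda$ can strictly exceed the maximum number of pairwise edge-disjoint such paths. Your greedy scheme founders for precisely the reason you anticipate: after deleting one edge from each path of your current family, the short path supplied by the robustness hypothesis may reuse other edges of those paths, and no rerouting argument is given; the global form of the hypothesis (all pairs at once) does not obviously rescue a fixed pair. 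So (2) is not established by your proposal.

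Moreover, the retreat you offer (keep the $s+1$ edge-disjoint paths but drop their length bound) is not sufficient for your purposes, because your proof of (4) needs each of the $s+1$ paths to have length $\leq\lambda$ in order to conclude that the $w_i$--$y$ subpaths have length $\leq\lambda-1$. Fortunately (4) --- the only clause beyond (3) that this paper actually uses, and only with $s=1$ --- admits a direct proof that bypasses (2) entirely: with $\deg(x)=s+1$ and $w$ a fixed neighbour of $x$, delete the $s$ edges incident to $x$ other than $xw$; by hypothesis the resulting graph still contains an $x$--$y$ path of length $\leq\lambda$, and that path must begin with the edge $xw$, so its remainder is a $w$--$y$ path of length $\leq\lambda-1$. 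I recommend either proving (4) this way or weakening (2) to the statement you can actually prove, flagging that the edge-disjoint paths need not all satisfy the length bound.
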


\begin{lemma}\cite{cac2}
Let $G\in \mathrm{Min} G(n, 3, \lambda, 1)$, where $\lambda\geq 4$.
Then $G$ possesses two adjacent vertices of degree $2$ for every
$n\geq 5$ except possibly $n=8,9,10$ and $12$. Furthermore, if $G$
does not possess two adjacent vertices of degree $2$, then the only
possible structures are the graphs $H^8,H^9,H^{10}_j (j=1,2,3,4)$
and $H^{12}_j(j=1,2,3)$ in Figure $2$.
\end{lemma}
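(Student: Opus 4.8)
The plan is to prove the lemma in its contrapositive form: assuming $G\in\mathrm{Min}\,G(n,3,\lambda,1)$ has \emph{no} two adjacent vertices of degree $2$, I would show this forces $n\in\{8,9,10,12\}$ and pins $G$ down to one of the listed graphs. The starting point is Observation $(3)$ with $s=1$, which gives $\delta(G)\ge 2$. I then let $U$ be the set of degree-$2$ vertices and $W=V(G)\setminus U$, so every vertex of $W$ has degree at least $3$; the standing hypothesis says exactly that $U$ is an independent set, hence each $u\in U$ sends both of its edges into $W$.

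First I would pin down $M(n,3,\lambda,1)$ from above by an explicit economical construction that \emph{does} contain two adjacent degree-$2$ vertices, so that comparison against it becomes meaningful. A clean candidate is two hubs $x,y$ joined by an edge together with internally disjoint paths $x\,a_i\,b_i\,y$; one checks that this graph is bridgeless, has diameter $3$, and that deleting any single edge leaves the diameter at most $3\le\lambda$, while each pair $a_i,b_i$ is a pair of adjacent degree-$2$ vertices. Counting edges yields $M(n,3,\lambda,1)\le\tfrac{3n-4}{2}$ (with the obvious parity adjustment). Next I would produce a lower bound under the independence hypothesis: by the handshake identity $2|E(G)|=2|U|+\sum_{w\in W}\deg(w)\ge 2|U|+3|W|$, so $|E(G)|\ge\tfrac{3n-|U|}{2}$, and minimality combined with the construction already forces $|U|\ge 4$.

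The substantive step is to show that independence of $U$ together with $diam(G)\le 3$ forces strictly more edges than the construction once $n$ is large. The key structural observation is that for any two $u,u'\in U$ the requirement $d(u,u')\le 3$ forces either $N(u)\cap N(u')\neq\varnothing$ or an edge of $G[W]$ joining $N(u)$ to $N(u')$. Translating this pairwise linking condition into a count, I would argue that either some vertex of $W$ is the common neighbor of many vertices of $U$ (pushing $\sum_{w\in W}\deg(w)$ well above $3|W|$), or else $G[W]$ must carry many internal edges to link the size-$2$ sets $N(u)$; in both regimes $|E(G)|$ exceeds $\tfrac{3n-4}{2}$ for all large $n$, contradicting minimality. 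The residual small orders where this squeeze fails to be strict are precisely $n\in\{8,9,10,12\}$, and for each of these I would run a finite case analysis on the overlap pattern of the pairs $N(u)$ and the induced structure of $G[W]$, discarding every configuration that violates $diam\le 3$, the single-edge-deletion diameter-$\lambda$ condition, or minimality, and retaining exactly $H^8,H^9,H^{10}_j\ (j=1,2,3,4)$ and $H^{12}_j\ (j=1,2,3)$.

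The hard part will be making the bound in the third step sharp enough to exclude \emph{all} $n\notin\{8,9,10,12\}$ rather than merely all sufficiently large $n$: this demands careful bookkeeping of how the overlap pattern of the neighbor-pairs $N(u)$ interacts with the edges of $G[W]$, since the inequalities become tight exactly at the four exceptional orders. The accompanying exhaustive enumeration for those orders is where essentially all of the remaining labor lies.
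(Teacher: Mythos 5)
The paper does not prove this statement at all: it is Lemma~2, quoted verbatim from Caccetta's 1979 paper \emph{Extremal graphs of diameter 3} \cite{cac2}, so there is no in-paper argument to compare yours against. Judged on its own terms, your outline is a reasonable skeleton for an extremal proof --- the independence of the degree-$2$ set $U$, the handshake bound $|E(G)|\ge\frac{3n-|U|}{2}$, the comparison construction of internally disjoint $x\,a_i\,b_i\,y$ paths giving $M(n,3,\lambda,1)\le\lceil\frac{3n-4}{2}\rceil$ (which does need the check that single-edge deletion keeps the diameter at most $\lambda$; this is where $\lambda\ge4$ enters), and the pairwise linking condition between the sets $N(u)\subseteq W$ forced by $d(u,u')\le3$.

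The genuine gap is that the entire load-bearing content is promised rather than performed, and the part you defer is not routine. First, converting the $\binom{|U|}{2}$ pairwise linking conditions into a \emph{strict} edge excess over $\lceil\frac{3n-4}{2}\rceil$ is a delicate double count: a hub vertex of $W$ common to many pairs $N(u)$ raises $\sum_{w\in W}\deg(w)$, but it does so while simultaneously satisfying many linking conditions at once, so the two ``regimes'' you describe trade off against each other and the inequality can be tight. Second, nothing in your sketch can distinguish the exceptional orders: a parity-flavored count of the kind proposed would, if it fails at $n=10$ and $n=12$, have no reason to succeed at $n=11$ (or at $n=5,6,7$), yet the lemma asserts exceptions exactly at $n\in\{8,9,10,12\}$. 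Isolating precisely those four orders requires finer use of the single-edge-deletion hypothesis than $\delta(G)\ge2$ --- e.g.\ Observation~1(2) and 1(4), which force every vertex nonadjacent to a degree-$(s{+}1)$ vertex $x$ to reach \emph{both} neighbors of $x$ by short paths --- and you never invoke that machinery. Finally, the exhaustive enumeration at the four exceptional orders, which must recover the specific graphs $H^8,H^9,H^{10}_j,H^{12}_j$, is acknowledged but entirely absent. As it stands the proposal is a plausible plan, not a proof.
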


There are many interesting results on Min$G(n, k, \lambda, s)$. We
refer the readers to \cite{bol1,bol2,bol3,cac1,cac2} for some more
results or details.

\section{Vertex set partition and orientation}
In this section, let $G\in \mathrm{Min} G(n, 3, \lambda, 1)$, where
$n\geq 5$ and $\lambda\geq 4$, and let $u$ and $v$ be two adjacent
vertices of degree $2$ in $G$. Suppose $u(v)$ is adjacent to $x(y)$.
Let $X,Y$ and $Z$ denote the sets $N(x)\setminus N(y),\
N(y)\setminus N(x)$ and $N(x)\cap N(y)$, respectively.

Let $A=X\cup Y\cup Z\cup \{u,v,x,y\}$. For $s\in V(G)\setminus A$,
clearly $d_{G}(s,u)=d_{G}(s,v)=3$ since $G\in G(n,3,\lambda,1)$,
that is $N(s)\cap N(x)\neq\emptyset$ and $N(s)\cap
N(y)\neq\emptyset$. We partition this set based on the distribution
of the neighbors of $s$.
\begin{align*}
& W=(N(X)\cap N(Y))\setminus A;\\
& I=(N(X)\cap N(Z))\setminus A;\\
& K=(N(Y)\cap N(Z))\setminus A;\\
& J=V(G)\setminus (W\cup I\cup K\cup A).
\end{align*}
See Figure $3$ for details.

At this point, we further partition $X$ and $Y$ as follows:
\begin{align*}
& X_1=\{x\in X\ |\ x\ \mathrm{has\ neighbors\ in}\ Y\cup Z\cup I\cup
W\}.\\
& X_2=\{x\in X\setminus X_1\ |\ x\ \mathrm{is\ an\ isolated\ vertex\ in}\ G[X\setminus X_1]\},\\
& X_3=X\setminus (X_1\cup X_2),\\
& Y_1=\{y\in Y\ |\ y\ \mathrm{has\ neighbors\ in}\ X\cup Z\cup K\cup W\},\\
& Y_2=\{y\in Y\setminus Y_1\ |\ y\ \mathrm{is\ an\ isolated\ vertex\ in}\ G[Y\setminus Y_1]\},\\
& Y_3=Y\setminus (Y_1\cup Y_2).
\end{align*}

Note that, in Figure~$3$, if $s$ and $t$ lie in distinct ellipses
and there exists no edge joining the two ellipses, then $s$ and $t$
are nonadjacent in $G$. Generally, the edges drawn in Figure~$3$ do
not indicate complete bipartite subgraph.

\begin{figure}[h,t,b]
\begin{center}
\scalebox{0.7}[0.7]{\includegraphics{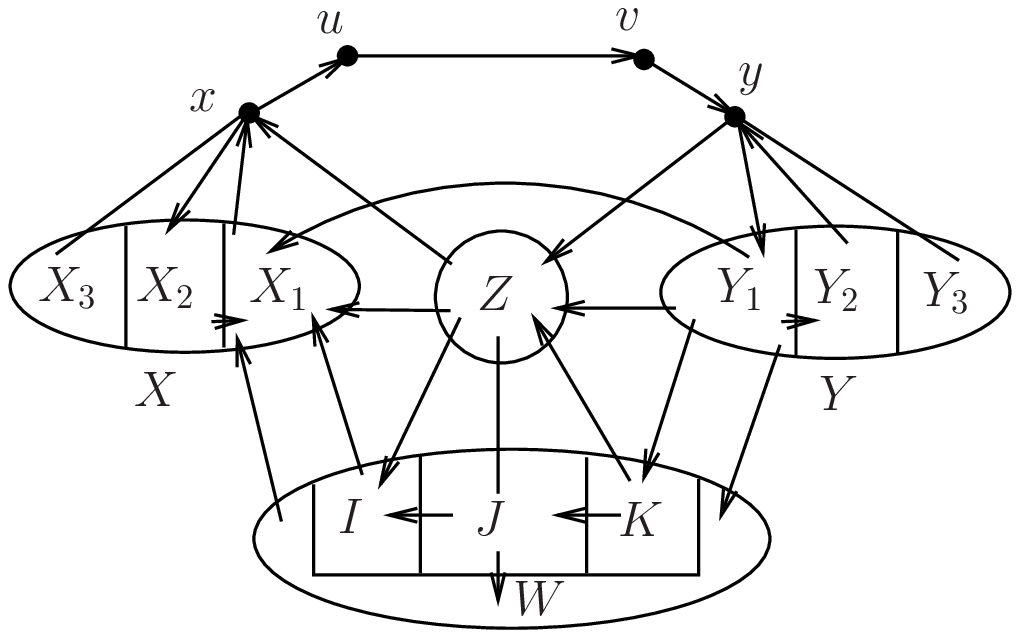}}

Figure 3. An optimal orientation of $G$.
\end{center}
\end{figure}

Given this partition, now we can give an orientation of some edges
of $G$, as shown in Figure $3$.  For the vertex sets (or vertices)
$S$ and $T$, we use the notation $S\rightarrow T$ to mean that all
edges with endpoints in $S$ and $T$ are oriented from $S$ to $T$.
Thus for every sequence below, all edges with ends in two successive
sets are oriented from the first set to the second.
\begin{align*}
& u\rightarrow v\rightarrow y \rightarrow Y_1\rightarrow W\rightarrow X_1\rightarrow x\rightarrow u,\\
& y\rightarrow Z\rightarrow x,\ Y_1\rightarrow Z\rightarrow X_1,\\
& Y_1\rightarrow K\rightarrow Z,\ Z\rightarrow I\rightarrow X_1,\\
& K\rightarrow J\rightarrow I,\ J\rightarrow W,
\end{align*}
\begin{align*}
& x\rightarrow X_2\rightarrow X_1,\ Y_1\rightarrow Y_2\rightarrow y.
\end{align*}

We further partition $J$ as follows:
\begin{align*}
& J_1= J\cap N(K),\ J_2= (J\cap N(I))\setminus J_1,\ J_3= (J\cap N(W))\setminus (J_1\cup J_2),\\
& J_4=J\setminus (J_1\cup J_2\cup J_3)=\{s\in J\ |\ s\ \mathrm{ has\
no\ neighbor\ in}\ I\cup K\cup W\},\\
& J_{4,1}=\{s\in J_4\ |\ N(s)\in Z\},\\
& J_{4,2}=J_4\setminus J_{4,1}=\{\ s\in J_4\ | \ s\in N(Z)\cup
N(J)\}.
\end{align*}

We further orient the edges of $G$ as follows:

\centerline{$J_1\rightarrow Z,\ Z\rightarrow J_2,\ Z\rightarrow
J_3$.}

Since $G\in \mathrm{Min} G(n, 3, \lambda, 1)$, then $\delta(G)\geq
2$ by Observation~1. Thus, for any $s\in J_{4,1}$, we oriented one
of $E[s,Z]$ from $s$ to $Z$, and the others from $Z$ to $s$. By the
above definition, we know that $G[J_{4,2}]$ does not have trivial
components.

Clearly, by Lemma~$1$ and Remark~$1$, there exist orientations
$F_1,F_2$ and $F_3$ of graphs $G[J_{4,2}]\cup (J_{4,2}\cup
Z,E[J_{4,2},Z])$, $G[X_3]\cup (X_3\cup\{x\},E[X_3,\{x\}])$ and
$(G[Y_3]\cup (Y_3\cup\{y\},E[Y_3,\{y\}])$, respectively, such that
for any $s_1\in J_{4,2}$, $s_2\in X_3$ and $s_3\in Y_3$, we have
$d_{F_1}(s_1,Z)\leq 2$, $d_{F_1}(Z,s_1)\leq 2$, $d_{F_2}(s_2,x)\leq
2$, $d_{F_2}(x,s_2)\leq 2$, $d_{F_3}(s_3,y)\leq 2$ and
$d_{F_3}(y,s_3)\leq 2$. The other edges can be oriented arbitrarily.

\section{Determine oriented diameter}

In this section, we show that every $G\in \mathrm{Min} G(n, 3,
\lambda, 1)$ has an oriented diameter at most $9$.

For the orientation in Section $3$, we have the following three
observations.

\begin{observation}
$(1)$ For every $x\in X_2$, $N(x)\cap X_1\neq\emptyset$; $(2)$ For
every $y\in Y_2$, $N(y)\cap Y_1\neq\emptyset$.
\end{observation}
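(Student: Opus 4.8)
The plan is to prove part $(1)$ directly by contradiction, by pinning down exactly which of the sets in the partition a vertex $x'\in X_2$ can meet; part $(2)$ then follows by the symmetry $x\leftrightarrow y$, $X\leftrightarrow Y$, $I\leftrightarrow K$. Fix $x'\in X_2$. By definition $x'\in X = N(x)\setminus N(y)$, so $x'\sim x$ and $x'\not\sim y$; and since $u,v$ have degree $2$ with neighbourhoods $\{v,x\}$ and $\{u,y\}$, the vertex $x'$ is adjacent to neither $u$ nor $v$. Because $x'\notin X_1$, it has no neighbour in $Y\cup Z\cup I\cup W$, and because $x'$ is an isolated vertex of $G[X\setminus X_1]$ it has no neighbour in $X_2\cup X_3 = X\setminus X_1$. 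Thus the only sets in which $x'$ could still have a neighbour are $X_1$, $K$ and $J$, and the goal is to rule out $K$ and $J$.

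First I would rule out $K$. If $x'$ were adjacent to some $k\in K$, then $k\in N(X)$; since also $k\in K\subseteq N(Z)$, we would get $k\in (N(X)\cap N(Z))\setminus A = I$, and then $x'$ would have a neighbour in $I$, contradicting $x'\notin X_1$. Hence $x'$ has no neighbour in $K$.

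The key step is to rule out $J$, for which I first establish a structural property: every $j\in J$ has a neighbour in $Z$ but no neighbour in $X$ or in $Y$. Indeed, $j\in V(G)\setminus A$ is a far vertex, so the shortest-path analysis (using that $N(x),N(y)\subseteq A$ and that $u,v$ have degree $2$) shows $j$ has a neighbour in $X\cup Z$ and a neighbour in $Y\cup Z$. Writing $a,b,c$ for the statements that $j$ has a neighbour in $X$, in $Y$, in $Z$ respectively, membership $j\in J$ forbids $a\wedge b$, $a\wedge c$ and $b\wedge c$ (these would place $j$ in $W$, $I$, $K$ respectively), while the far condition gives $(a\vee c)\wedge(b\vee c)$; the only consistent possibility is $c$ true and $a,b$ false. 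In particular no $j\in J$ has a neighbour in $X$, so $x'$ has no neighbour in $J$.

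Combining the three exclusions, the only possible neighbours of $x'$ are $x$ together with vertices of $X_1$. Since $\delta(G)\geq 2$ by Observation~$1$, we have $\deg(x')\geq 2$, so $x'$ has a neighbour other than $x$, which must lie in $X_1$; that is, $N(x')\cap X_1\neq\emptyset$, proving $(1)$. I expect the case analysis for the structural property of $J$ to be the main obstacle, since that is where the definitions of $W,I,K,J$ and the diameter-$3$ (far-vertex) condition must be combined carefully; once that property is in hand the remaining exclusions and the degree argument are immediate.
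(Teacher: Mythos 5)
Your proof is correct, and it fills in exactly the argument the paper leaves implicit (the paper's ``proof'' is only the one-line assertion that the claim follows from the definition of the partition): the elimination of $Y\cup Z\cup I\cup W$ and of $X\setminus X_1$ comes straight from the definition of $X_2$, your reclassification argument ruling out a neighbour in $K$ and your three-way case analysis showing every vertex of $J$ meets $Z$ but not $X\cup Y$ are both valid, and the final appeal to $\delta(G)\geq 2$ is the same minimum-degree fact the paper invokes from Observation~1. No gaps.
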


It is easy to see that this observation holds by the definition of
the vertex set partition.

\begin{observation}
$(1)$ For every $s\in X_1$, there exists a path with length at most
$3$ from $y$ to $s$ in the oriented graph $\overrightarrow{G}$;
$(2)$ For every $s\in Y_1$, there exists a path with length at most
$3$ from $s$ to $x$ in the oriented graph $\overrightarrow{G}$.
\end{observation}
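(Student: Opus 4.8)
The plan is to prove (1) by a case analysis on where a neighbor of $s$ can lie, and then to deduce (2) from (1) by the symmetry of the construction. By the definition of $X_1$, the vertex $s$ has a neighbor in $Y\cup Z\cup I\cup W$, so I would split into four cases and in each exhibit an explicit directed path of length at most $3$ from $y$ to $s$ using only the arcs fixed in Section~3. Three of the cases are routine. If $s$ has a neighbor $z\in Z$, then $y\to z$ (from $y\rightarrow Z$, since $Z\subseteq N(y)$) and $z\to s$ (from $Z\rightarrow X_1$), a path of length $2$. If $s$ has a neighbor $i\in I$, then since $i\in I=(N(X)\cap N(Z))\setminus A$ the vertex $i$ has a neighbor $z\in Z$, and $y\to z\to i\to s$ via $y\rightarrow Z$, $Z\rightarrow I$, $I\rightarrow X_1$ has length $3$. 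If $s$ has a neighbor $w\in W$, then $w\in(N(X)\cap N(Y))\setminus A$ has a neighbor $y'\in Y$; as $y'$ has a neighbor in $W$ it lies in $Y_1$, so $y\to y'\to w\to s$ via $y\rightarrow Y_1$, $Y_1\rightarrow W$, $W\rightarrow X_1$ again has length $3$. The content of these three cases is exactly that the defining membership condition of $I$ (resp.\ $W$) supplies the intermediate vertex of $Z$ (resp.\ $Y_1$) with its arc already pointing the right way.

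The remaining case, which I expect to be the main obstacle, is when the only qualifying neighbor of $s$ lies in $Y$, say $y'\in Y$ with $y's\in E(G)$. Then $y'$ has a neighbor in $X$, so $y'\in Y_1$, and $y's$ is a direct $X_1$--$Y_1$ edge, which is not among the arcs fixed by the sequences in Section~3. I would resolve this by orienting every such direct edge from $Y_1$ to $X_1$: this is consistent with the global one-way flow $y\to Y_1\to W\to X_1\to x\to u\to v\to y$, and I would check that it conflicts with none of the constraints already imposed on $\overrightarrow{G}$. With this convention in force, $y\to y'\to s$ is a path of length $2$, completing (1).

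Finally I would get (2) for free. The whole construction is invariant under the role-reversal that interchanges $x\leftrightarrow y$, $u\leftrightarrow v$, $X_i\leftrightarrow Y_i$ and $I\leftrightarrow K$, fixes $Z$, $W$ and $J$, and reverses every arc; one verifies directly that this maps each oriented sequence of Section~3 to another such sequence (for instance $Y_1\rightarrow K\rightarrow Z$ becomes $Z\rightarrow I\rightarrow X_1$). Under this symmetry a directed $y$-to-$s$ path with $s\in X_1$ becomes a directed path of the same length from a vertex of $Y_1$ to $x$, which is precisely the statement of (2); note in particular that the $X$-neighbor subcase of (2) requires exactly the same $Y_1\to X_1$ orientation chosen above, so the two parts are compatible. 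Thus the only real work is the bookkeeping of which arc and which witness neighbor are used in each of the four cases.
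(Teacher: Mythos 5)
Your argument is correct and is essentially the paper's own proof: the paper likewise splits on whether the neighbor $t$ of $s\in X_1$ guaranteed by the definition of $X_1$ lies in $Y$, $Z$, $I$ or $W$, exhibits the same witness paths $y,t,s$ (with $t\in Y_1$ or $t\in Z$), $y,z,t,s$ and $y,y',t,s$, and then dismisses $(2)$ as symmetric. Your explicit stipulation that the edges of $E[Y_1,X_1]$ be oriented from $Y_1$ to $X_1$ is precisely what the paper's $t\in Y$ case tacitly relies on (it is drawn in Figure~3 but not listed among the displayed orientation rules), so you have not changed the approach, only made one of its conventions explicit.
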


\begin{proof}
Since the proof methods are similar, we only show $(1)$. For any
$s\in X_1$, by the definition of set $X_1$, we know that $s$ has
neighbors in $Y\cup Z\cup I\cup W$. Let $t$ be a neighbor of $s$ in
$Y\cup Z\cup I\cup W$. If $t\in Y$, then $t\in Y_1$ by the
definition of set $Y_1$. Thus $y,t,s$ is a path with length $2$ from
$y$ to $t$. If $t\in Z$, then $y,t,s$ is a path with length $2$ from
$y$ to $t$. If $t\in I$, then $y,z,t,s$ is a path with length $3$
from $y$ to $t$, where $z$ is a neighbor of $t$ in $Z$ (Note that
such a vertex $z$ must exist by the definition of set $I$. Otherwise
$t\in W$, then $y,y',t,s$ is a path with length $3$ from $y$ to $t$,
where $y'$ is a neighbor of $t$ in $Y_1$.
\end{proof}

\begin{observation}
If $Z\neq \emptyset$, then $(1)$ for every $s\in X_2\cup X_3$, there
exists a path with length at most $4$ from $y$ to $s$ in the
oriented graph $\overrightarrow{G}$; $(2)$ for every $s\in Y_2 \cup
Y_3$, there exists a path with length at most $4$ from $s$ to $x$ in
the oriented graph $\overrightarrow{G}$.
\end{observation}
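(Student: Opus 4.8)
The plan is to mirror the structure of Observation~3, proving only part~(1) since part~(2) follows by the symmetric roles of $X$ and $Y$ (and of $x$ and $y$). The goal is to produce, for each $s\in X_2\cup X_3$, a directed path of length at most $4$ from $y$ to $s$ under the orientation fixed in Section~3. The natural strategy is to route through $X_1$ and invoke Observation~2(1), which already guarantees a directed $y$-to-$t$ path of length at most $3$ for any $t\in X_1$. So the hard part is only the last edge: I want to reach $s$ from some $t\in X_1$ in one more step, using the orientation $X_2\to X_1$ and (for $X_3$) the orientation $F_2$.

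First I would treat $s\in X_2$. By Observation~1(1), $s$ has a neighbor $t\in X_1$, and the Section~3 orientation sends all such edges as $X_2\to X_1$, i.e.\ from $s$ to $t$—which is the wrong direction. So the honest route is the reverse: I would instead use the orientation $x\to X_2$, giving the arc $x\to s$, together with a short directed path from $y$ to $x$. Since $Z\neq\emptyset$ by hypothesis, pick $z\in Z$; the orientation contains $y\to Z\to x$, so $y,z,x$ is a directed path of length $2$ from $y$ to $x$, and appending the arc $x\to s$ yields a directed path $y,z,x,s$ of length $3\le 4$. This is where the hypothesis $Z\neq\emptyset$ is genuinely needed, and it is the step I expect to be the main obstacle to get cleanly, because without a vertex in $Z$ there is no guaranteed short path into $x$ oriented the right way.

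Next I would treat $s\in X_3$. Here the orientation $F_2$ of $G[X_3]\cup (X_3\cup\{x\},E[X_3,\{x\}])$ supplied at the end of Section~3 guarantees $d_{F_2}(x,s)\le 2$, i.e.\ a directed path of length at most $2$ from $x$ to $s$. Combining this with the directed path $y,z,x$ of length $2$ from $y$ to $x$ (again using $z\in Z$) produces a directed path from $y$ to $s$ of length at most $2+2=4$, as required. In both subcases the bound $4$ is met, completing part~(1).

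Finally, for part~(2) I would run the identical argument with the roles of $X,Y,x,y$ interchanged: for $s\in Y_2$ use the arc $s\to y$ via $Y_1\to Y_2\to y$—or, to keep the orientations consistent, use $Y_2\to Y_1$ together with a length-$2$ path $y\to z\to\cdots$; and for $s\in Y_3$ use $F_3$ with $d_{F_3}(s,y)\le 2$, then append the directed path $y,z,x$ of length $2$ from $y$ to $x$ (using $z\in Z$) to reach $x$ within total length $4$. The symmetry of the construction in Section~3—every orientation rule for the $X$-side has a mirror rule for the $Y$-side—makes this transcription routine, so I would simply remark that (2) follows by symmetry rather than rewriting it in full.
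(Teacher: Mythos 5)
Your proposal is correct and, after the initial false start through $X_1$ (which you rightly abandon), it lands on exactly the paper's argument: route $y,z,x$ through an arbitrary $z\in Z$, then append the arc $x\to s$ for $s\in X_2$ or the length-$\le 2$ path from $x$ to $s$ in $F_2$ for $s\in X_3$, with part (2) by symmetry. The only blemish is the parenthetical mention of ``$Y_2\to Y_1$,'' which is not an orientation rule of Section~3 (the rule is $Y_1\to Y_2\to y$), but your primary route $s\to y\to z\to x$ is the correct one.
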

\begin{proof}
Since the proof methods are similar, we only show $(1)$. Since
$Z\neq \emptyset$, we pick any $z\in Z$. Then $y,z,x$ is a path with
length $2$ from $y$ to $x$. If $s\in X_2$, then $y,z,x,s$ is a path
with length $3$ from $y$ to $s$. Otherwise, that is, $s\in X_3$,
there exists a path $P$ with length at most $2$ from $x$ to $s$ by
Lemma~$1$ and Remark~$1$. Thus, a path with length at most $4$ from
$y$ to $x$ is obtained from the path $y,z,x$ and $P$.
\end{proof}

\begin{lemma}
Let $G\in \mathrm{Min} G(n, 3, \lambda, 1)$, where $n\geq 5$ and
$\lambda\geq 4$. If $G$ possesses two adjacent vertices of degree
$2$, then the oriented diameter of $G$ is at most $9$.
\end{lemma}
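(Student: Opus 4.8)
The plan is to show that every ordered pair of vertices in $\overrightarrow{G}$ is connected by a directed path of length at most $9$. The orientation has been carefully designed around the "backbone" cycle
$$
u \rightarrow v \rightarrow y \rightarrow Y_1 \rightarrow W \rightarrow X_1 \rightarrow x \rightarrow u,
$$
so the strategy is to route an arbitrary source vertex $s$ into this backbone (reaching one of the anchor vertices $x$, $y$, or a vertex of $Z$) and then route from the backbone out to an arbitrary target vertex $t$. The three observations already proved are exactly the "in-and-out" bounds one needs: Observation~2 gives short directed paths from $y$ into $X_1$ and from $Y_1$ into $x$, while Observation~3 extends these to $X_2\cup X_3$ and $Y_2\cup Y_3$ when $Z\neq\emptyset$. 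So the bulk of the argument is a case analysis on where $s$ and $t$ live among the cells $\{u,v,x,y\}$, $X_1,X_2,X_3$, $Y_1,Y_2,Y_3$, $Z$, $W$, $I$, $K$, and the four pieces of $J$.

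**First I would** establish the key "distance to the backbone" estimates. For each cell $S$ I want a uniform bound on $d_{\overrightarrow{G}}(s, \{x,y,Z\})$ and on $d_{\overrightarrow{G}}(\{x,y,Z\}, s)$. These follow by chasing the orientation arrows: for instance a vertex in $J_1$ reaches $Z$ in one step ($J_1\rightarrow Z$), a vertex in $K$ reaches $Z$ in one step ($K\rightarrow Z$), a vertex in $I$ reaches $X_1$ hence $x$ in two steps ($I\rightarrow X_1\rightarrow x$), a vertex in $W$ reaches $X_1\rightarrow x$, and so on. The orientations $F_1,F_2,F_3$ supplied by Lemma~1 and Remark~1 handle $J_{4,2}$, $X_3$, and $Y_3$ with the $\le 2$ guarantees stated at the end of Section~3, and the single-vertex argument handles $J_{4,1}$ via its edges to $Z$. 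I would tabulate, for every cell, the worst-case number of steps in and the worst-case number of steps out, so that concatenating an in-path and an out-path through a common anchor gives the $d(s,t)$ bound.

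**The most delicate point** will be the vertices whose only route to or from the backbone passes through $Z$, because if $Z=\emptyset$ those routes disappear; this is precisely why Observation~3 carries the hypothesis $Z\neq\emptyset$. I therefore expect the case analysis to split into the case $Z\neq\emptyset$, where the path $y,z,x$ (and its reverse $y,z,x$ pattern through arrows $y\rightarrow Z\rightarrow x$) acts as a length-$2$ shortcut linking the two halves of the backbone and feeds Observation~3, and the case $Z=\emptyset$, where $I,K$ and the $Z$-dependent pieces of $J$ are empty or must be reached differently, forcing the long way around the seven-vertex backbone cycle. Bounding the pairs that must traverse almost the entire backbone—e.g. a source in $X_2$ or $X_3$ to a target in $Y_2$ or $Y_3$—is where the worst cases live, and verifying that even these stay within $9$ is the crux of the lemma.

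**To finish**, I would combine the in/out tables: for a generic pair $(s,t)$ the directed distance is at most (steps from $s$ to an anchor) $+$ (steps around the backbone between anchors) $+$ (steps from an anchor to $t$), and I would check that the maximum over all cell-pairs does not exceed $9$. The backbone cycle itself has length $7$, and the worst interior detours add at most $2$ on each side while sharing anchors, so the bound $9$ is tight exactly at the configurations that realize the lower bound $f(3)\ge 9$ of Proposition~1. I expect the routine but lengthy verification to reduce, after the in/out tables are in place, to a finite check; the genuine mathematical content is confined to the $Z=\emptyset$ subcase and to confirming that no pair forces a tenth arc.
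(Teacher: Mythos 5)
Your plan coincides with the paper's proof: the paper verifies exactly this routing scheme, tabulating for every ordered pair of cells a directed path obtained by concatenating an entry path into the anchors $x,y,Z$, a traversal of the backbone $u\rightarrow v\rightarrow y\rightarrow Y_1\rightarrow W\rightarrow X_1\rightarrow x\rightarrow u$, and an exit path (Tables $1$--$4$), with Lemma~$1$ and Remark~$1$ handling $X_3$, $Y_3$, $J_{4,2}$ and the two observations supplying the in/out bounds just as you describe. The case split on $Z$ is also the paper's (when $Z=\emptyset$ one gets $I=J=K=\emptyset$, so that subcase collapses), and the remaining work is the same finite check you propose.
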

\begin{proof}

We show that the oriented graph $\overrightarrow{G}$ in Section~$3$
has a diameter at most $9$. Consider the following two cases.

{\bf Case $1.$} $Z\neq \emptyset$.

We only show that for any $(s,t)\in (J,V(G)\setminus J),
(V(G)\setminus J,J)$ and $(J,J)$, the distance from $s$ to $t$ is at
most $9$, since it is easy to check the remaining cases hold.
\begin{figure}[htb]
{\scriptsize
\begin{center}
\begin{tabular}{|p{0.5cm}|p{2.1cm}|p{2cm}|p{2cm}|p{2cm}|p{2.5cm}|}
\hline & $J_1$ & $J_2$ & $J_3$ & $J_{4,1}$  & $J_{4,2}$\\\hline

$x$ &$x,u,v,y,Y_1,K,J_1$& $x,u,v,y,Z,J_2$ & $x,u,v,y,Z,J_3$ &
$x,u,v,y,Z,J_{4,1}$ & $x,u,v,y,Z$ and Lemma $1$
\\\hline

$u$ &$u,v,y,Y_1,K,J_1$& $u,v,y,Z,J_2$ & $u,v,y,Z,J_3$ &
$u,v,y,Z,J_{4,1}$ & $u,v,y,Z$ and Lemma~$1$
\\\hline

$v$ &$v,y,Y_1,K,J_1$& $v,y,Z,J_2$ & $v,y,Z,J_3$ & $v,y,Z,J_{4,1}$ &
$v,y,Z$ and Lemma~$1$
\\\hline

$y$ &$y,Y_1,K,J_1$& $y,Z,J_2$ & $y,Z,J_3$ & $y,Z,J_{4,1}$ & $y,Z$
and Lemma $1$
\\\hline

$Y_1$ & Obs~$3$ and $x,$ $u,$ $v,y,Y_1,K,J_1$& Obs~$3$ and $x,$
$u$,$v$,$y$,$Z$,$J_2$ &  Obs~$3$ and $x$, $u$,$v$,$y$,$Z$,$J_3$ &
Obs~$3$ and $x$,$u$,$v$,$y$,$Z$,$J_{4,1}$ & Obs~$3$,
$x$,$u$,$v$,$y$,$Z$ and Lemma $1$
\\\hline

$Y_2$ &$Y_2$,$y$,$Y_1$,$K$,$J_1$& $Y_2$,$y,Z,J_2$ & $Y_2$,$y,Z,J_3$
& $Y_2$,$y,Z,J_{4,1}$ & $Y_2$,$y,Z$~and~Lemma~$1$
\\\hline

$Y_3$ & Lemma~$1$ and $y,Y_1,K,J_1$& Lemma~$1$ and $y,Z,J_2$ &
Lemma~$1$ and $y,Z,J_3$ & Lemma~$1$ and $y,Z,J_{4,1}$ & Lemma~$1$,
$y,Z$ and Lemma~$1$
\\\hline

$K$ &$K,Z,x,u,v,y,Y_1,$ $K,J_1$ & $K,Z,x,u,v,y,Z,$ $J_2$ &
$K,Z,x,u,v,y,Z,$ $J_3$ & $K,Z,x,u,v,y,Z,$ $J_{4,1}$ &
$K,Z,x,u,v,y,Z$ and Lemma~$1$
\\\hline

$W$ &$W,X_1,x,u,v,y$, $Y_1,K,J_1$ & $W,X_1,x,u,v,y$, $Z,J_2$ &
$W,X_1,x,u,v,y$, $Z,J_3$ & $W,X_1,x,u,v,y$, $Z,J_{4,1}$ &
$W,X_1,x,u,v,y,Z$ and Lemma~$1$
\\\hline

$Z$ &$Z,x,u,v,y,Y_1,K$, $J_1$ & $Z,x,u,v,y,Z,J_2$ & $Z,x,u,v,y,Z,$
$J_3$ & $Z,x,u,v,y,Z,$ $J_{4,1}$ & $Z,x,u,v,y,Z$ and Lemma~$1$
\\\hline

$I$ &$I,X_1,x,u,v,y$, $Y_1,K,J_1$ & $I,X_1,x,u,v,y$, $Z,J_2$ &
$I,X_1,x,u,v,y$, $Z,J_3$ & $I,X_1,x,u,v,y,$ $Z,J_{4,1}$ &
$I,X_1,x,u,v,y,Z$ and Lemma~$1$
\\\hline

$X_1$ &$X_1,x,u,v,y,Y_1$, $K$, $J_1$ & $X_1,x,u,v,y,Z,$ $J_2$ &
$X_1,x,u,v,y,Z,$ $J_3$ & $X_1,x,u,v,y,Z,$ $J_{4,1}$ &
$X_1,x,u,v,y,Z$ and Lemma~$1$
\\\hline

$X_2$ &$X_2,X_1,x,u,v,y$, $Y_1,$ $K,J_1$ & $X_2,X_1,x,u,v,y$, $Z,$
$J_2$ & $X_2,X_1,x,u,v,y$, $Z,$ $J_3$ & $X_2,X_1,x,u,v,y$, $Z,$
$J_{4,1}$ & $X_2,X_1,x,u,v,y$, $Z$ and Lemma~$1$
\\\hline

$X_3$ & Lemma~$1$ and $x,u,v,y,Y_1,K$, $J_1$ & Lemma~$1$ and
$x,u,v,y,Z,$ $J_2$ & Lemma~$1$ and $x,u,v,y,Z,$ $J_3$ & Lemma~$1$
and $x,u,v,y,Z,J_{4,1}$ & Lemma~$1$,$x,u,v,y,Z$ and Lemma~$1$
\\\hline
\end{tabular}
\end{center}}
\centerline{Table $1$. Directed paths in $\overrightarrow{G}.$}
\end{figure}
\begin{figure}[htbp]
{\scriptsize
\begin{center}
\begin{tabular}{|p{0.5cm}|p{2.1cm}|p{2cm}|p{2cm}|p{2cm}|p{2.5cm}|}
\hline & $J_1$ & $J_2$ & $J_3$ & $J_{4,1}$  & $J_{4,2}$\\\hline

$J_1$ &$J_1,Z,x,u,v,y,Y_1,$ $K,J_1$& $J_1,Z,x,u,v,y,$ $Z,J_2$ &
$J_1,Z,x,u,v,y,$ $Z,J_3$ & $J_1,Z,x,u,v,y,$ $Z,J_{4,1}$ &
$J_1,Z,x,u,v,$ $y,$ $Z$ and Lemma $1$
\\\hline

$J_2$ &$J_2,I,X_1,x,u,v$, $y,Y,K,J_1$& $J_2,I,X_1,x,u,v,$ $y,Z,J_2$
& $J_2,I,X_1,x,u,v$, $y,Z,J_3$ & $J_2,I,X_1,x,u,v,$ $y,Z,J_{4,1}$ &
$J_2,I,X_1,x,u,v,y,Z$ and Lemma~$1$
\\\hline

$J_3$ &$J_3,W,X_1,x,u,v$, $y,Y,K,J_1$& $J_3,W,X_1,x,u,v,$ $y,Z,J_2$
& $J_3,W,X_1,x,u,v$, $y,Z,J_3$ & $J_3,W,X_1,x,u,v,$ $y,Z,J_{4,1}$ &
$J_3,W,X_1,x,u,v,y$, $Z$ and Lemma~$1$
\\\hline

$J_{4,1}$ &$J_{4,1},Z,x,u,v,y$, $Y,K,J_1$& $J_{4,1},Z,x,u,v,y,$
$Z,J_2$ & $J_{4,1},Z,x,u,v,y,$ $Z,J_3$ & $J_{4,1},Z,x,u,v,y,$
$Z,J_{4,1}$ & $J_{4,1},Z,x,u,v,$ $y,$ $Z$ and Lemma $1$
\\\hline

$J_{4,2}$ & Lemma~$1$ and $Z,x,u,v,y,Y_1,K$, $J_1$& Lemma~$1$ and
$Z,x,u,v,y,Z$, $J_2$ & Lemma~$1$ and $Z,x,u,v,y,Z$, $J_3$ &
Lemma~$1$ and $Z,x,u,v,y,Z$, $J_{4,1}$ & Lemma~$1$ and $Z$,
$x,u,v,y,Z$ and Lemma $1$
\\\hline
\end{tabular}
\end{center}}
\centerline{Table $2$. Directed paths in $\overrightarrow{G}$}
\end{figure}

If $(s,t)\in (V(G)\setminus J,J)$, then a path with length at most
$9$ from $s$ to $t$ is presented in Table~$1$. The notation
$X_1,X_2,\cdots,X_k$ means that there exists some desired path
$P=(x_1,x_2,\ldots,x_k)$, where $x_i\in X_i,\ i=1,2,\ldots,k$.

Note that, in Figure~$3$, at least two of $X_1,Y_1,Z$ are nonempty;
some sets may be empty (if so, the proof becomes easier). Though we
have not indicated all kinds of cases, in fact, we can get
information from these sequences of Table~1. For example, if
$(s,t)\in (X_3,J_3)$, then $X_3$ and $J_3$ are nonempty. By the
definition of set $J_3$, we know that $W,X_1$ and $Y_1$ are also
nonempty. In Table~$1$, $(s,t)\in (X_3,J_3)$ corresponds to
``Lemma~$1$ and $x,u,v,y,Z,$ $J_3$''. ``Lemma~$1$'' means that there
exists a path with length at most $2$ from $s$ to $x$ by the
definition of the above orientation and Lemma~1.

If  $(s,t)\in (J,J)$, then a path with length at most $9$ from $s$
to $t$ is presented in Table~$2$. If $(s,t)\in (J,V(G)\setminus J)$,
then a path with length at most $9$ from $s$ to $t$ is presented in
Table~$3$.

\begin{figure}[htbp]
{\scriptsize
\begin{center}
\begin{tabular}{|p{0.5cm}|p{1.1cm}|p{1.1cm}|p{1.1cm}|p{1.1cm}|p{1.6cm}|p{1.6cm}|p{1.8cm}|}
\hline & $x$ & $u$ & $v$ & $y$  & $Y_1$ & $Y_2$ & $Y_3$ \\\hline

$J_1$ &$J_1,Z$, $x$& $J_1,Z,x$, $u$ & $J_1,Z,x$, $u,v$ & $J_1,Z,x$,
$u,v,y$ & $J_1,Z,x,u,v,$ $y,$ $Y_1$ & $J_1,Z,x,u,v,$ $y,$ $Y_1,Y_2$
& $J_1,Z,x,u,v,y,$ and Lemma~$1$
\\\hline

$J_2$ &$J_2,I,X_1$, $x$& $J_2,I,X$, $x,u$ & $J_2,I,X_1,$ $x,u,v$ &
$J_2,I,X_1$, $x,u,v,y$ & $J_2,I,X_1,x,u$, $v,y,Y_1$ & $J_2,I,X_1,x$,
$u,v,y,Y_1,Y_2$ & $J_2,I,X_1,x,u$, $v$, $y,$ and Lemma~$1$
\\\hline

$J_3$ &$J_3,W,X_1$, $x$& $J_3,W,X_1$, $x,u$ & $J_3,W,X_1$, $x,u,v$ &
$J_3,W,X_1$, $x,u,v,y$ & $J_3,W,X_1,x$, $u,v,y,Y_1$ & $J_3,W,X_1,x$,
$u,v,y,Y_1,Y_2$ & $J_3,W,X_1,x$, $u,v,y,$ and Lemma~$1$
\\\hline

$J_{4,1}$ &$J_{4,1},Z$, $x$& $J_{4,1},Z$, $x,u$ & $J_{4,1},Z$,
$x,u,v$ & $J_{4,1},Z$, $x,u,v,y$ & $J_{4,1},Z,x,u$, $v,y,Y_1$ &
$J_{4,1},Z,x,u$, $v,y,Y_1,Y_2$ & $J_{4,1},Z,x,u,v,$
$y,$~and~Lemma~$1$
\\\hline

$J_{4,2}$ & Lemma~1 and $Z$, $x$& Lemma~1 and~$Z,x$, $u$ & Lemma~1
and~$Z,x$, $u,v$ & Lemma~1 and~$Z,x$, $u,v,y$ & Lemma~1~and $Z,
x,u,v,y,Y_1$ & Lemma~1~and $Z,x,u,v,y$, $Y_1,Y_2$ & Lemma~1~$Z$,
$x,u,v,y,$ and Lemma~$1$
\\\hline
\end{tabular}
\vspace*{10pt}

\centerline{\normalsize Table $3$. Directed paths in
$\overrightarrow{G}.$}
\end{center}}
\end{figure}

\begin{figure}[htbp]
{\scriptsize
\begin{center}
\begin{tabular}{|p{0.5cm}|p{1.1cm}|p{1.1cm}|p{1.1cm}|p{1.1cm}|p{1.9cm}|p{1.4cm}|p{1.8cm}|}
\hline & $K$ & $W$ & $Z$ & $I$  & $X_1$ & $X_2$ & $X_3$\\\hline

$J_1$ & $J_1,Z,x$, $u,v,y,$ $Y,$ $K$ & $J_1,Z,x$, $u,v,y,$ $Y,$ $W$
& $J_1,Z,x$, $u,v,y,$ $Z$ & $J_1,Z,x$, $u,v,y,$ $Z,I$ &
$J_1,Z,x,u,v,y,$ and Obs~$3$ & $J_1,Z,x,X_2$ & $J_1,Z,x,$ and
Lemma~$1$
\\\hline

$J_2$ & $J_2,I,X_1$, $x,u,v,y,$ $Y_1,$ $K$ & $J_2,I,X_1$, $x,u,v,y,$
$Y_1,$ $W$ & $J_2,I,X_1$, $x,u,v,y,$ $Z$ & $J_2,I,X$, $x,u,v,y,$
$Z,I$ & $J_2,I,X_1,x,u,v$, $y,$~and~Obs~$3$ & $J_2,I,X_1,x$, $X_2$&
$J_2,I,X_1,x$ and Lemma~$1$
\\\hline

$J_3$ & $J_3,W,X$, $x,u,v,$ $y,$ $Y,$ $K$ & $J_3,W,X$, $x,u,v,$ $y,$
$Y,$ $W$ & $J_3,W,X$, $x,u,v,$ $y,$ $Z$ & $J_3,W,X$, $x,u,v,$ $y,$
$Z,I$ & $J_3,W,X$, $x,u,v,$ $y,$~and~Obs~$3$ & $J_3,W,X,x$,  $X_2$ &
$J_3,W,X,x,$ and Lemma~$1$
\\\hline

$J_{4,1}$ & $J_{4,1},Z,x$, $u,v,y,$ $Y_1,$ $K$ & $J_{4,1},Z,x$,
$u,v,y,$ $Y_1,$ $W$ & $J_{4,1},Z,x$, $u,v,y,$ $Z$ & $J_{4,1},Z,x$,
$u,v,y,$ $Z,I$ & $J_{4,1},Z,x,u,v,y,$ and Obs~$3$ & $J_{4,1},Z,x$,
$X_2$ & $J_{4,1},Z,x,$ and Lemma~$1$
\\\hline

$J_{4,2}$ & Lemma~$1$, $Z,x,u$, $v,y,Y,$ $K$ & Lemma~$1$ $Z,x,u$,
$v,y,Y,$ $W$ & Lemma~$1$ $Z,x,u$, $v,y,Z$ & Lemma~$1$ $Z,x,u$,
$v,y,Z$,$I$ & Lemma~$1$, $Z$, $x,u,v,y,$ and Obs~$3$ & Lemma~$1$,
$Z$, $x,X_2$ & Lemma~$1\ Z$, $x,$~and~Lemma~$1$
\\\hline
\end{tabular}
\vspace*{10pt} \centerline{\normalsize Table $4$. Directed paths in
$\overrightarrow{G}.$}
\end{center}}
\end{figure}

{\bf Case $2.$} $Z= \emptyset$. By definition of the above set
partition, we know that $I=J=K=\emptyset$, and this case can be
checked easily. Observation~$4$ is used in this case.
\end{proof}

\begin{lemma}
Let $G\in \mathrm{Min} G(n, 3, \lambda, 1)$, where $n\geq 5$ and
$\lambda\geq 4$. If $G$ does not possess two adjacent vertices of
degree $2$, then the oriented diameter of $G$ is at most $9$.
\end{lemma}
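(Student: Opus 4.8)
The plan is to reduce the statement to a finite verification by appealing to Lemma~$2$. Since $G\in \mathrm{Min} G(n,3,\lambda,1)$ with $\lambda\geq 4$ and $G$ has no two adjacent vertices of degree $2$, Lemma~$2$ forces $G$ to be isomorphic to one of the nine explicit graphs $H^8$, $H^9$, $H^{10}_1,\ldots,H^{10}_4$, $H^{12}_1,H^{12}_2,H^{12}_3$ displayed in Figure~$2$. Hence, unlike Lemma~$3$, there is no family to handle with the partition machinery of Section~$3$; it suffices to treat each of these finitely many graphs separately.

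For each graph $H$ in the list I would proceed as follows. First, fix a labelling of its vertices consistent with Figure~$2$ and record its adjacency structure. Next, since $\delta(H)\geq 2$ by Observation~$1$, the graph $H$ is bridgeless, so by Robbins' theorem it admits a strongly connected orientation; guided by the philosophy of Section~$3$ (orient edges so as to route short directed cycles through the low-degree vertices), I would write down one explicit strongly connected orientation $\overrightarrow{H}$. Finally, I would compute $d_{\overrightarrow{H}}(s,t)$ for every ordered pair $(s,t)$ of vertices and check that the maximum does not exceed $9$.

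To keep the bookkeeping manageable I would exploit the evident symmetry of these extremal graphs: each of them has a nontrivial automorphism group, so it is enough to compute directed distances from one representative of each vertex orbit, and for each such source to exhibit a shortest directed path only to the farthest targets. Because the required bound $9$ is generous for graphs this small, the individual distance computations have ample slack and are entirely routine.

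The main difficulty here is organizational rather than conceptual: there is no single uniform construction covering all nine graphs, so each must be oriented and checked by hand, and for each one must first secure strong connectivity---without which no finite oriented diameter exists---before the distance count is meaningful. Verifying strong connectivity is precisely where the minimum-degree condition $\delta(H)\geq 2$ from Observation~$1$ is essential, since it guarantees that a suitable cyclic orientation avoiding sources and sinks can always be chosen. I expect the resulting oriented diameters to fall comfortably below $9$ in every case, so the lemma should follow once the nine checks are tabulated, much as the cases of Lemma~$3$ were recorded in Tables~$1$--$4$.
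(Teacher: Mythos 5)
Your proposal follows essentially the same route as the paper: invoke Lemma~$2$ to reduce to the nine exceptional graphs $H^8$, $H^9$, $H^{10}_j$ $(j=1,2,3,4)$, $H^{12}_j$ $(j=1,2,3)$, then exhibit an explicit orientation of each and verify by direct inspection that its diameter is at most $9$ (the paper records these orientations in Figure~$4$). One small correction: $\delta(H)\geq 2$ does not by itself imply bridgelessness; that these graphs are bridgeless follows instead directly from membership in $G(n,3,\lambda,1)$, since deleting any single edge must leave a graph of finite diameter.
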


\begin{proof}
Let $G\in \mathrm{Min} G(n, 3, \lambda, 1)$ which does not possess
two adjacent vertices of degree $2$, where $n\geq 5$ and
$\lambda\geq 4$. By Lemma $2$, $G$ must be one of the graphs
$H^8,H^9,H^{10}_j (j=1,2,3,4)$ and $H^{12}_j(j=1,2,3)$ if Figure
$3$. It suffices to show that the oriented diameters of these graphs
is at most $9$. In fact, we can orient theses graphs as Figure $4$.
It is easy to check that the diameters of
$\overrightarrow{H^8},\overrightarrow{H^9},\overrightarrow{H^{10}_j}
(j=1,2,3,4)$ and $\overrightarrow{H^{12}_j}(j=1,2,3)$ are at most
$9$. Thus we are done.
\end{proof}

\begin{figure}[h,t,b]
\begin{center}
\scalebox{0.7}[0.7]{\includegraphics{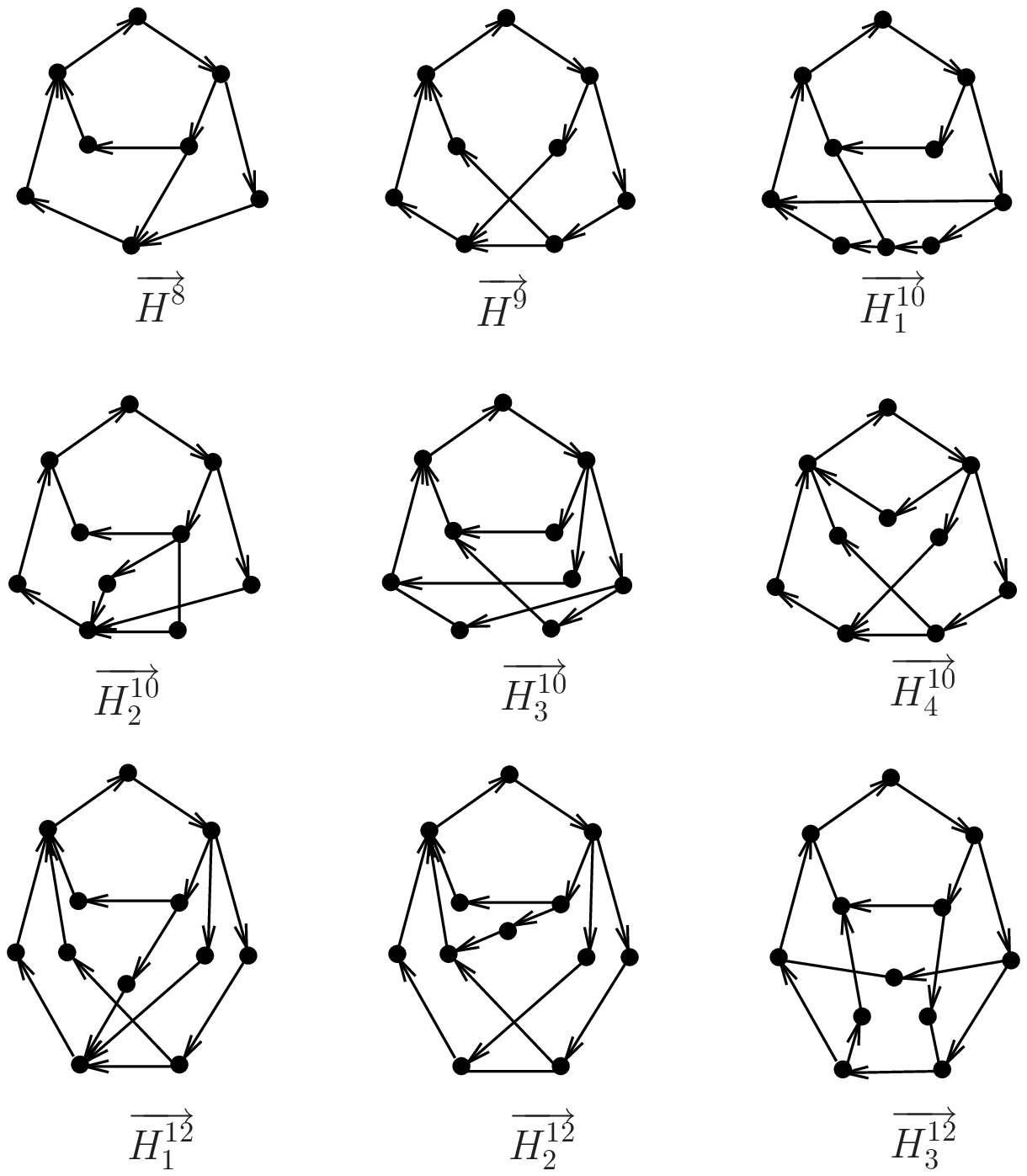}}

Figure 4. the orientations of $H^8,H^9,H^{10}_j (j=1,2,3,4)$ and
$H^{12}_j(j=1,2,3).$
\end{center}
\end{figure}

Combining Lemmas $3$ and $4$, we know that the following theorem
holds.
\begin{theorem}
Let $G\in \mathrm{Min} G(n, 3, \lambda, 1)$, where $n\geq 5$ and
$\lambda\geq 4$. Then the oriented diameter of $G$ is at most $9$.
\end{theorem}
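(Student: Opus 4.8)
The plan is to prove the theorem by splitting into the two cases that Lemma~2 naturally provides, and to let the heavy lifting be done by the explicit orientation constructed in Section~3. The dichotomy is whether $G$ possesses two adjacent vertices of degree $2$. I would first invoke Lemma~2 to guarantee that this dichotomy is exhaustive: either $G$ has such a pair, or $n\in\{8,9,10,12\}$ and $G$ is one of the finitely many exceptional graphs $H^8,H^9,H^{10}_j,H^{12}_j$ depicted in Figure~2. So the entire theorem reduces to two self-contained statements, which I would isolate as Lemmas~3 and~4, and then simply combine them.

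For the generic case (Lemma~3), the idea is to fix the two adjacent degree-$2$ vertices $u,v$ with respective neighbors $x,y$, and build the vertex-set partition $X,Y,Z,W,I,K,J$ (with the finer subdivisions $X_i,Y_i,J_i,J_{4,j}$) exactly as in Section~3, together with the orientation $\overrightarrow{G}$ specified there. The key structural input is that, since $G\in\mathrm{Min}\,G(n,3,\lambda,1)$, every vertex outside $A=X\cup Y\cup Z\cup\{u,v,x,y\}$ has distance exactly $3$ to both $u$ and $v$, which forces each such vertex to meet both $N(x)$ and $N(y)$; this is what makes the partition into $W,I,K,J$ exhaustive and is the backbone of every routing argument. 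Then I would verify that $\mathrm{diam}(\overrightarrow{G})\le 9$ by checking, for an arbitrary ordered pair $(s,t)$, that a directed $s$-$t$ path of length at most $9$ exists. Because the orientation sends edges forward along the long cyclic spine $u\to v\to y\to Y_1\to W\to X_1\to x\to u$ together with the shortcut arcs through $Z,I,K,J$, most pairs route along explicit sequences, and these are exactly the entries tabulated in Tables~1--4. The three auxiliary Observations~2--4 supply the short ``ingress/egress'' sub-paths from the Lemma~1/Remark~1 sets $X_3,Y_3,J_{4,2}$ into the spine, so that a ``Lemma~$1$'' entry in a table stands for a length-$\le 2$ detour; I would split on $Z\ne\emptyset$ versus $Z=\emptyset$, noting that in the latter case $I=J=K=\emptyset$ collapses the problem and Observation~4 handles the residual pairs.

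For the exceptional case (Lemma~4), the plan is purely finite and computational: for each of the graphs $H^8,H^9,H^{10}_j\,(j=1,2,3,4),H^{12}_j\,(j=1,2,3)$ I would exhibit the specific orientation drawn in Figure~4 and directly confirm that the resulting digraph is strongly connected with diameter at most $9$, which is a bounded finite check per graph.

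The step I expect to be the main obstacle is the exhaustive pair-by-pair verification underlying Tables~1--4 in Lemma~3, and in particular making the abbreviated table entries rigorous. The genuine difficulties are twofold: first, the partition sets may individually be empty, so I must argue that whenever a routing sequence such as $J_3,W,X_1,x,\dots$ is invoked, the definitions of the sets forcing nonemptiness are actually in effect (for instance, $J_3\ne\emptyset$ forces $W,X_1,Y_1\ne\emptyset$, as the paper remarks); second, the entries marked ``Lemma~$1$'' and ``Obs~$3$'' hide length-$2$ detours whose correctness relies on the orientations $F_1,F_2,F_3$ giving $d_{F_i}\le 2$ in both directions, so I must be careful that concatenating such a detour of length $\le 2$ with a spine path of length $\le 7$ still respects the global bound of $9$. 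Controlling this worst-case additivity—verifying that no concatenation along the longest spine overshoots—is where the $9$ is tight and where the argument must be checked most carefully.
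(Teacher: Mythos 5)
Your proposal follows the paper's proof essentially verbatim: it reduces the theorem to the dichotomy of Lemma~2, handles the generic case via the Section~3 partition and orientation with the pair-by-pair verification of Tables~1--4 (split on $Z\neq\emptyset$ versus $Z=\emptyset$, with Observations~2--4 and Lemma~1 supplying the short detours), and disposes of the exceptional graphs by the explicit orientations of Figure~4. This is the same decomposition and the same argument as the paper's Lemmas~3 and~4 combined into Theorem~1.
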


Since the oriented diameter of $G$ is no larger than the oriented
diameter of its spanning subgraph. We have the following corollary.

\begin{corollary}
If a graph $G$ has a spanning bridgeless subgraph with diameter at
most $3$, which admits two adjacent vertices of degree $2$, then the
oriented diameter of $G$ is at most $9$.
\end{corollary}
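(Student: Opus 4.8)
The plan is to reduce the statement to Lemma~$3$ via the monotonicity of oriented diameter under taking spanning subgraphs, and then to observe that the proof of Lemma~$3$ does not really use minimality. First I would make the monotonicity step explicit. Let $H$ be the spanning bridgeless subgraph of $G$ provided by the hypothesis, so $V(H)=V(G)$ and $E(H)\subseteq E(G)$. Choose an orientation $\overrightarrow{H}$ of $H$ of smallest diameter and extend it to an orientation $\overrightarrow{G}$ of $G$ by orienting each edge of $E(G)\setminus E(H)$ arbitrarily. Since $\overrightarrow{H}$ is a spanning subdigraph of $\overrightarrow{G}$, we have $d_{\overrightarrow{G}}(a,b)\le d_{\overrightarrow{H}}(a,b)$ for all $a,b\in V(G)$, and therefore $diam(\overrightarrow{G})\le diam(\overrightarrow{H})$. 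Consequently the oriented diameter of $G$ is at most that of $H$, and it suffices to prove that $H$ has oriented diameter at most $9$.

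Next I would run the entire construction of Section~$3$ on $H$ in place of $G$. The three structural facts that the construction and the proof of Lemma~$3$ actually consume are: $H$ is bridgeless, hence $\delta(H)\ge 2$; $H$ has diameter at most $3$; and $H$ contains two adjacent vertices $u,v$ of degree $2$. Given these, the partition of $V(H)$ into $X,Y,Z,W,I,K,J$ and its refinements is pure neighbourhood bookkeeping; the equalities $d_H(s,u)=d_H(s,v)=3$ for $s\notin A$ follow from $\deg u=\deg v=2$ together with $diam(H)\le 3$; and the auxiliary orientations $F_1,F_2,F_3$ are produced by Lemma~$1$ and Remark~$1$, whose only hypothesis---that $H[J_{4,2}]$, $H[X_3]$ and $H[Y_3]$ have no trivial components---is forced by the way $X_2$, $Y_2$ and $J_{4,1}$ were split off. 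With the resulting orientation $\overrightarrow{H}$ in hand, the directed paths recorded in Tables~$1$--$4$, supplemented by Observations~$2$, $3$ and $4$ (the last for the case $Z=\emptyset$), furnish a directed path of length at most $9$ between every ordered pair of vertices. Hence $diam(\overrightarrow{H})\le 9$, and the monotonicity step yields the corollary.

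The main obstacle is precisely the legitimacy of transplanting Lemma~$3$ from the extremal class $\mathrm{Min}\,G(n,3,\lambda,1)$ to an arbitrary bridgeless diameter-$3$ graph with two adjacent degree-$2$ vertices, since $H$ need not have the minimum number of edges. To close this gap rigorously I would audit the proof of Lemma~$3$ and the preliminary discussion of Section~$3$, checking that every appeal to ``$G\in \mathrm{Min}\,G(n,3,\lambda,1)$'' (and to $\lambda\ge 4$) serves only to invoke $\delta(G)\ge 2$ through Observation~$1(3)$ or to guarantee the diameter-$3$ distance structure---both of which $H$ enjoys merely by being bridgeless of diameter at most $3$. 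The delicate part of this audit is confirming that no entry of Tables~$1$--$4$ silently relies on an extremal feature of the partition (for instance on some of $X_1,Y_1,Z$ being nonempty for minimality reasons); once each such entry is seen to depend only on the three structural facts above, Lemma~$3$ applies verbatim to $H$ and the proof is complete.
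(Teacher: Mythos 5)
Your proposal is correct and follows essentially the same route as the paper, which justifies the corollary solely by the remark that the oriented diameter of $G$ is at most that of any spanning subgraph and then (implicitly) applies the construction of Section~3 to that subgraph. Your additional audit---checking that the proof of Lemma~3 uses only bridgelessness (hence $\delta\ge 2$), diameter at most $3$, and the two adjacent degree-$2$ vertices, rather than edge-minimality---is exactly the step the paper leaves unstated, so you have simply made the paper's argument explicit rather than taken a different path.
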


\begin{corollary}
If a graph $G$ admits a spanning subgraph contained in $\mathrm{Min}
G(n, 3, \lambda, 1)$, then the oriented diameter of $G$ is at most
$9$.\end{corollary}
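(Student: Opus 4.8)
The plan is to reduce this corollary directly to Theorem~1 by the same elementary monotonicity of the oriented diameter under edge addition that was already invoked just before Corollary~1. First I would fix a spanning subgraph $H$ of $G$ with $H\in\mathrm{Min}G(n,3,\lambda,1)$ for some $\lambda\geq 4$; by hypothesis such an $H$ exists, and since it is spanning we have $V(H)=V(G)$ and $E(H)\subseteq E(G)$. The structural hypotheses needed to apply Theorem~1 (namely $n\geq 5$ and $\lambda\geq 4$) are exactly those packaged into membership in the class $\mathrm{Min}G(n,3,\lambda,1)$, so no extra work is needed to verify them.

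Next I would apply Theorem~1 to $H$. Because $H\in\mathrm{Min}G(n,3,\lambda,1)$, Theorem~1 yields an orientation $\overrightarrow{H}$ of $H$ with $diam(\overrightarrow{H})\leq 9$; concretely, for every ordered pair $(s,t)$ of vertices of $H$ there is a directed $s$--$t$ path in $\overrightarrow{H}$ of length at most $9$.

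The final step is the extension of this orientation to all of $G$. I would orient each edge of $E(G)\setminus E(H)$ arbitrarily, producing an orientation $\overrightarrow{G}$ whose arc set contains every arc of $\overrightarrow{H}$. Since $V(H)=V(G)$, every directed path of $\overrightarrow{H}$ is also a directed path of $\overrightarrow{G}$, so $d_{\overrightarrow{G}}(s,t)\leq d_{\overrightarrow{H}}(s,t)\leq 9$ for every ordered pair $(s,t)$ of vertices of $G$. Hence $diam(\overrightarrow{G})\leq 9$, and therefore the oriented diameter of $G$, being the minimum over all orientations, is at most $9$.

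I do not expect a genuine obstacle here: essentially all of the content lives in Theorem~1, and the corollary is just the observation that enlarging the edge set can only shorten directed distances once an optimal orientation of the smaller graph has been chosen and retained. The only point that deserves an explicit line is confirming that $\overrightarrow{G}$ is a legitimate orientation of $G$ (each edge of $G$ receives exactly one arc) and that the inherited arcs guarantee $d_{\overrightarrow{G}}\le d_{\overrightarrow{H}}$; both are immediate from $E(H)\subseteq E(G)$ and the arbitrary orientation of the remaining edges.
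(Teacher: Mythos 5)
Your proposal is correct and follows exactly the paper's route: the paper derives this corollary by combining Theorem~1 with the one-line observation that the oriented diameter of $G$ is no larger than that of any spanning (bridgeless) subgraph, which is precisely your extend-the-orientation-arbitrarily argument spelled out in detail. No genuine difference in approach and no gap.
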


By the above arguments, we propose the following conjecture.

\begin{conjecture}
Every bridgeless graphs with diameter at most $3$ has an oriented
diameter at most $9$.
\end{conjecture}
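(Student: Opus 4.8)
The plan is to derive the Conjecture from Theorem~1 by a spanning-subgraph reduction. The engine is the elementary monotonicity already underlying Corollaries~1 and~2: if $H$ is a spanning subgraph of $G$ and $\overrightarrow{H}$ is an orientation of diameter at most $d$, then extending $\overrightarrow{H}$ to $G$ by orienting the edges of $E(G)\setminus E(H)$ arbitrarily only adds arcs, so $d_{\overrightarrow{G}}(s,t)\le d_{\overrightarrow{H}}(s,t)$ for all $s,t$, and hence the oriented diameter of $G$ is at most that of $H$. Consequently it suffices to show that every bridgeless $G$ with $diam(G)\le 3$ contains a spanning subgraph that is \emph{either} a bridgeless graph of diameter at most $3$ possessing two adjacent vertices of degree $2$ (whence Corollary~1 gives oriented diameter at most $9$) \emph{or} one of the extremal graphs $H^8,H^9,H^{10}_j,H^{12}_j$ of Figure~2 (whence Corollary~2 applies). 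Establishing this existence statement closes the argument.

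To produce such a subgraph I would start from $G$ itself and delete edges greedily, keeping the graph bridgeless and of diameter at most $3$, until no further edge can be removed. This yields an edge-minimal spanning bridgeless subgraph $H$ of diameter at most $3$, in which every edge is \emph{critical}: deleting it either creates a bridge or raises the diameter above $3$. Note that $H\in G(n,3,\lambda,1)$ for $\lambda=\max_{e\in E(H)}diam(H-e)$, so in particular $\delta(H)\ge 2$ by Observation~1 (equivalently, because $H$ is bridgeless). I would then aim to prove a structural statement in the spirit of Caccetta's Lemma~2, but for edge-minimal rather than minimum graphs: such an $H$ must contain two adjacent vertices of degree $2$, with only the finitely many sporadic exceptions of Figure~2. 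Granting this, the reduction of the previous paragraph finishes the proof.

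The hard part is precisely this last structural step. Caccetta's Lemma~2 is proved by a global extremal/counting argument valid for the graph of minimum edge number on a fixed vertex set, whereas an edge-minimal spanning subgraph of a prescribed $G$ need not attain that global minimum and carries no such count; one has only the \emph{local} criticality of each edge. Converting criticality into the existence of two adjacent degree-$2$ vertices, and simultaneously ruling out or enumerating the exceptional configurations, would require a fresh discharging or double-counting argument over the critical edges, and this is where I expect the real difficulty to lie.

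As a hedge against this obstacle, a second and more direct route is worth attempting: repeat the partition of Section~3 anchored on an arbitrary edge $uv$ chosen to minimize $\deg(u)+\deg(v)$, setting $X=N(u)\setminus N(v)$, $Y=N(v)\setminus N(u)$ and $Z=N(u)\cap N(v)$ in place of the degree-$2$ construction, and then re-verify the directed-path tables (Tables~1--4) together with Observations~2--4. The obstacle there is that $u$ and $v$ may now have large degree, so the funnels through $x$ and $y$ widen and some of the exhibited directed paths could exceed length $9$; one would have to exploit $diam(G)\le 3$ and the minimality of $\deg(u)+\deg(v)$ to certify that every ordered pair of vertices is still joined by a directed path of length at most $9$.
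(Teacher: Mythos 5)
This statement is not proved in the paper: it is stated as Conjecture~1 and explicitly left open (the authors only remark that, if true, it would combine with Proposition~1 to give $f(3)=9$; the best proved upper bound remains $f(3)\le 11$ from Kwok, Liu and West). So there is no proof in the paper to compare yours against, and your proposal must stand on its own. It does not: the reduction you describe has a genuine gap, which you yourself flag. Theorem~1 and Corollary~2 apply only to graphs in $\mathrm{Min}\,G(n,3,\lambda,1)$, i.e., graphs attaining the \emph{global} minimum $M(n,3,\lambda,1)$ of edges over all $n$-vertex graphs in the class, and Corollary~1 applies only when a spanning bridgeless diameter-$\le 3$ subgraph with two adjacent degree-$2$ vertices exists. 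Your greedy deletion produces an edge-critical spanning subgraph $H$, but edge-criticality is a local property: it neither places $H$ in $\mathrm{Min}\,G(n,3,\lambda,1)$ nor forces two adjacent vertices of degree $2$. The Petersen graph is a concrete obstruction: it is bridgeless, $3$-regular, of diameter $2\le 3$ and girth $5$, so deleting any edge $ab$ leaves $d(a,b)=4$; hence every edge is critical, your greedy process run on it returns the Petersen graph itself, and that graph has no vertex of degree $2$ at all. Unless it happens to coincide with one of Caccetta's sporadic minimum graphs $H^{10}_j$ (which are singled out by a global edge count, not by criticality), neither corollary applies, and the structural lemma you would need (``every edge-critical bridgeless graph of diameter at most $3$ has two adjacent degree-$2$ vertices, up to finitely many exceptions'') is false as stated.

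Your hedge --- re-running the Section~3 partition anchored at an edge $uv$ minimizing $\deg(u)+\deg(v)$ --- is a reasonable direction but is only a plan, not a proof. The verification in Section~4 leans crucially on $\deg(u)=\deg(v)=2$: every path in Tables~1--4 funnels through the arcs $x\to u\to v\to y$, which are available precisely because $u$ and $v$ each have a \emph{unique} neighbor outside $\{u,v\}$, so that $X$, $Y$, $Z$ and the derived sets $W,I,K,J$ exhaust $V(G)$ and Observation~1(4) controls the distances from the outliers. With $\deg(u),\deg(v)\ge 3$ one must choose which neighbors play the roles of $x$ and $y$, reroute the remaining neighbors of $u$ and $v$, and re-certify entries such as $J_2,I,X_1,x,u,v,y,Y_1,K,J_1$, which already have length exactly $9$ and have no slack. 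Until that case analysis is actually carried out, the statement remains what the paper says it is: a conjecture.
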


If the above conjecture is true, then by Proposition $1$, one can
get that $f(3)=9.$

\end{document}